\newcommand{\h}{\hbox}
\newcommand{\q}{\quad}
\newcommand{\nin}{\noindent}
\newcommand{\ms}{\par\medskip}
\newcommand{\sk}{\par\smallskip}
\newcommand{\msn}{\par\medskip\noindent}
\newcommand{\skn}{\par\smallskip\noindent}
\newcommand{\ges}{\geqslant}
\newcommand{\les}{\leqslant}
\newcommand{\one}{\hskip1pt}
\newcommand{\mopl}{\hbox{$\bigoplus$}}
\newcommand{\Hc}{{\mathcal H}}
\newcommand{\X}{{\mathcal X}}
\newcommand{\PP}{{\mathbb P}}
\newcommand{\Q}{{\mathbb Q}}
\newcommand{\C}{{\mathbb C}}
\newcommand{\DD}{{\mathbb D}}
\newcommand{\RR}{{\mathbf R}}
\newcommand{\Z}{{\mathbb Z}}
\newcommand{\ee}{{\mathbf e}}
\newcommand{\Ht}{\widetilde{H}}
\newcommand{\dfw}{{\rm d}f{\wedge}}
\newcommand{\df}{{\rm def}}
\newcommand{\Ff}{F_{\!f}}
\newcommand{\IC}{{\rm IC}}
\newcommand{\IH}{{\rm IH}}
\newcommand{\Gr}{{\rm Gr}}
\newcommand{\Sing}{{\rm Sing}}
\newcommand{\al}{\alpha}
\newcommand{\Ga}{\Gamma}
\newcommand{\De}{\Delta}
\newcommand{\si}{\sigma}
\newcommand{\om}{\omega}
\newcommand{\Om}{\Omega}
\newcommand{\ddd}{{\rm d}}
\newcommand{\tos}{\,{\to}\,}
\newcommand{\eq}{\,{=}\,}
\newcommand{\defs}{\,{:=}\,}
\newcommand{\nes}{\,{\ne}\,}
\newcommand{\ins}{\,{\in}\,}
\newcommand{\sst}{\,{\subset}\,}
\newcommand{\stm}{\,{\setminus}\,}
\newcommand{\gess}{\,{\ges}\,}
\newcommand{\less}{\,{\les}\,}
\newcommand{\sgt}{\,{>}\,}
\newcommand{\col}{\,{:}\,}
\newcommand{\pl}{\one {+}\one}
\newcommand{\mi}{\one {-}\one}
\newcommand{\bl}{\bigl}
\newcommand{\br}{\bigr}
\newcommand{\ssb}{\raise.15ex\h{${\scriptscriptstyle\bullet}$}}
\newcommand{\ssc}{\,\raise.15ex\h{${\scriptstyle\circ}$}\,}
\newcommand{\into}{\hookrightarrow}
\newcommand{\simto}{\,\,\rlap{\hskip1.5mm\raise1.4mm\hbox{$\sim$}}\hbox{$\longrightarrow$}\,\,}
\renewcommand\section{\@startsection{section}{1}{0pt}{-3ex plus -1ex minus -.2ex}{2.3ex plus.2ex}{\centering\normalfont\bfseries}}
\theoremstyle{plain}
\newtheorem{thm}{Theorem}[section]
\newtheorem{ithm}{Theorem}
\newtheorem{icor}{Corollary}
\theoremstyle{definition}
\newtheorem{rem}{Remark}[section]
\newtheorem{exam}{Example}[section]
\begin{document}
\title[Defect of projective hypersurfaces]{Defect of projective hypersurfaces\\with isolated singularities}
\author[S.-J. Jung]{Seung-Jo Jung}
\address{S.-J. Jung : Department of Mathematics Education, and Institute of Pure and Applied Mathematics, Jeonbuk National University, Jeonju, 54896, Korea}
\email{seungjo@jbnu.ac.kr}
\author[M. Saito]{Morihiko Saito}
\address{M. Saito : RIMS Kyoto University, Kyoto 606-8502 Japan}
\email{msaito@kurims.kyoto-u.ac.jp}
\thanks{This work was partially supported by National Research Foundation of Korea NRF-2021R1C1C1004097.}
\begin{abstract} Let $X$ be a hypersurface with isolated singularities defined by $f$ in ${\bf P^{n+1}}$ with $n>1$. The difference ${\rm def}(X):=h^{n+1}(X)-h^{n-1}(X)$ is called the defect of $X$ (for self-duality of the cohomology of $X$). It is known that its vanishing is closely related to ${\bf Q}$-factoriality of $X$ without assuming rational singularities when $n=3$. This number coincides with the dimension of the cokernel of the inclusion $H^{n-1}(X)\to{\rm IH}^{n-1}(X)$, the rank of the morphism from the vanishing cohomologies of $X$ to $H^{n+1}(X)$ for a one-parameter smoothing of $X$ with total space smooth, and also with the dimension of the unipotent monodromy part of the Milnor fiber cohomology of $f$ with degree $n$. In the case $X$ has only weighted homogeneous isolated singularities, the defect ${\rm def}(X)$ is then given by the $E_2$-term of the spectral sequence of the double complex with differentials ${\rm d}f\wedge$ and $\rm d$ by the $E_2$-degeneration of the pole order spectral sequence. It can be calculated explicitly using a computer even for analogues of the Hirzebruch quintic threefold with more than one hundred ordinary double points found by B.\ van Geemen and J.\ Werner in a compatible way with their computation. We give also an example with ${\rm def}(X)>0$ and $|{\rm Sing}\,X|=1$ in the non-projective-cone case where $n=3$.
\end{abstract}
\maketitle

\section*{Introduction} \label{intr}
\nin
Let $X\sst\PP^{n+1}$ be a hypersurface with {\it isolated\one} singularities with $n\gess2$. The {\it defect\one} $\df(X)$ is the difference $h^{n+1}(X)\mi h^{n-1}(X)$, where $h^k(X)\defs\dim H^k(X)$ for $k\ins\Z$ (which might be called rather the Poincar\'e number). This measures partly a failure of self-duality. Note that the pairing on the middle cohomology may be degenerate and the total failure is given by the link cohomology, see Remark~\ref{R1.2} below. The cohomology groups in this paper are with $\Q$-coefficients unless otherwise stated.
\sk
Let $\{\X_c\}_{c\in\De}$ be a one-parameter smoothing of $X\eq\X_0$ in $\PP^{n+1}$ such that the total space and the $\X_c$ are smooth for $c\ins\De^*$, where $\De$ is a sufficiently small disk. This can be obtained by choosing a smooth hypersurface of the same degree intersecting $X$ only at smooth points. Applying also the weak Lefschetz theorem, we then get the equalities
\begin{equation} \label{1}
h^k(X)=h^k(\X_c)\,(=0\,\,\,\h{or}\,\,\,1)\,\,\,\,\h{for any}\,\,\,k\nes n,\,n{+}1,\,\,c\ins\De^*,
\end{equation}
(depending on the parity of $k$) together with the exact sequence of mixed Hodge structures
\begin{equation} \label{2}
0\to H^n(X)\to H^n(\X_{\infty})\buildrel{\rho}\over\to V\buildrel{\si}\over\to H^{n+1}(X)\to H^{n+1}(\X_{\infty})\to 0.
\end{equation}
Here $H^{\ssb}(\X_{\infty})$ denotes the limit mixed Hodge structure of the one-parameter degeneration, and $V\defs\mopl_x\,V_x$ with $V_x$ the vanishing cohomology at $x\ins\Sing\,X$, see \cite{St76}, \cite{St77}, \cite{mhm}.
\sk
Let $\IH^k(X)$ be the intersection cohomology. We have the canonical isomorphisms
\begin{equation} \label{3}
\iota^k:H^k(X)\simto\IH^k(X)\q\h{for any}\,\,\,k\nes n{-}1,\,n,
\end{equation}
together with the exact sequence of mixed Hodge structures
\begin{equation} \label{4}
0\to H^{n-1}(X)\buildrel{\iota^{n-1}}\over\longrightarrow\IH^{n-1}(X)\to{}^NV_1(1)\to H^n(X)\buildrel{\iota^n}\over\to\IH^n(X)\to 0.
\end{equation}
Here $V_1\sst V$ denotes the unipotent monodromy part, $^NV_1\defs{\rm Ker}\,N\sst V_1$, and $N\defs\log T_u$ with $T\eq T_sT_u$ the Jordan decomposition of the monodromy (and $(1)$ is a Tate twist).
\sk
Let $H^{\ssb}(\Ff)_1$ be the unipotent monodromy part of the Milnor fiber cohomology with $f$ a defining polynomial of $X$. The following theorem and corollaries generalize some assertions in the literature (see for instance \cite{NaSt}, \cite{vGeWe}).

\begin{ithm} \label{T1}
There are equalities
\begin{equation} \label{5}
\begin{aligned}
&\df(X)=\dim{\rm Coker}\,\iota^{n-1}\\&=\dim{\rm Coker}\,\rho=\dim{\rm Im}\,\si=\dim H^n(\Ff)_1.
\end{aligned}
\end{equation}
\end{ithm}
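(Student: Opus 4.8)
The plan is to deduce all four equalities from the exact sequences~\eqref{2} and~\eqref{4}, from Poincar\'e duality for ordinary and for intersection cohomology, and --- for the last one alone --- from one further geometric input; since~\eqref{1}--\eqref{4} are recalled above, the argument is in essence a bookkeeping of dimensions. I start with $\df(X)\eq\dim{\rm Coker}\,\iota^{n-1}$: as $\iota^{n-1}$ is injective by~\eqref{4}, $\dim{\rm Coker}\,\iota^{n-1}\eq\dim\IH^{n-1}(X)\mi h^{n-1}(X)$; Poincar\'e duality for the intersection cohomology of the $n$-dimensional projective variety $X$ gives $\dim\IH^{n-1}(X)\eq\dim\IH^{n+1}(X)$, and $\iota^{n+1}$ is an isomorphism by~\eqref{3} (since $n{+}1\nes n{-}1,\,n$); hence $\dim\IH^{n-1}(X)\eq h^{n+1}(X)$ and the equality follows.

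Next I treat the equalities with $\rho$ and $\si$. Exactness of~\eqref{2} at $V$ gives ${\rm Coker}\,\rho\eq V/{\rm Ker}\,\si\cong{\rm Im}\,\si$, so $\dim{\rm Coker}\,\rho\eq\dim{\rm Im}\,\si$; exactness of~\eqref{2} at $H^{n+1}(X)$ together with the surjectivity of $H^{n+1}(X)\onto H^{n+1}(\X_\infty)$ there gives $\dim{\rm Im}\,\si\eq h^{n+1}(X)\mi h^{n+1}(\X_\infty)$. Now $H^{\ssb}(\X_\infty)$ has the same underlying vector space as $H^{\ssb}(\X_c)$ for $c\ins\De^*$, and $\X_c$ is a smooth hypersurface of dimension $n$, so $h^{n+1}(\X_c)\eq h^{n-1}(\X_c)$ by weak Lefschetz and Poincar\'e duality; with~\eqref{1} this gives $h^{n+1}(\X_\infty)\eq h^{n-1}(X)$, whence $\dim{\rm Im}\,\si\eq h^{n+1}(X)\mi h^{n-1}(X)\eq\df(X)$.

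The remaining equality $\df(X)\eq\dim H^n(\Ff)_1$ needs the extra input, and I expect this to be the main obstacle. Writing $d\defs\deg f$, the restriction of the scaling $\C^*$-action to $\Ff\eq\{f\eq1\}$ exhibits $\Ff\tos U\defs\PP^{n+1}\stm X$ as a free $\mu_d$-covering on which $\mu_d$ acts through the monodromy $T$; as $T^d\eq\id$, the unipotent --- equivalently, the fixed --- part of $H^n(\Ff)$ is the subspace of $\mu_d$-invariants, and hence $H^n(\Ff)_1\cong H^n(U)$. Since $U$ is a smooth affine variety of dimension $n{+}1$, Poincar\'e duality gives $H^n(U)\cong H^{n+2}_c(U)^\vee$, and I would then run the long exact sequence
\[
H^{n+1}(\PP^{n+1})\buildrel{r}\over\to H^{n+1}(X)\tos H^{n+2}_c(U)\tos H^{n+2}(\PP^{n+1})\tos H^{n+2}(X)
\]
of the decomposition of $\PP^{n+1}$ into the closed subvariety $X$ and its open complement $U$: here $r$ and the rightmost map are both injective (each an instance of the injectivity of $H^j(\PP^{n+1})\tos H^j(X)$ for $j\less 2n$, a power of the hyperplane class restricting to a nonzero class on $X$), and $\dim{\rm Im}\,r\eq h^{n-1}(X)$ by~\eqref{1}; therefore $\dim H^{n+2}_c(U)\eq h^{n+1}(X)\mi h^{n-1}(X)\eq\df(X)$, which closes the chain. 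The two genuinely non-formal steps are the covering identification $H^n(\Ff)_1\cong H^n(U)$ and the bookkeeping of this Gysin-type sequence on the affine complement; everything else is formal, from~\eqref{1}--\eqref{4} and standard duality.
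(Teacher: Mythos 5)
Your proposal is correct, and three of the four identifications are made essentially as in the paper: the equality $\df(X)\eq\dim{\rm Coker}\,\iota^{n-1}$ via self-duality of $\IH^{\ssb}(X)$ together with \eqref{3} and \eqref{4}, and the equality $\df(X)\eq\dim H^n(\Ff)_1$ via $H^n(\Ff)_1\eq H^n(U)$ and a Gysin-type sequence for the pair $(\PP^{n+1},X)$ --- your version with $H^{n+2}_c(U)$ and Poincar\'e duality on the smooth affine $U$ is literally the dual formulation of the paper's local cohomology sequence \eqref{2.4}--\eqref{2.6}, and your justification of the injectivity of $H^j(\PP^{n+1})\to H^j(X)$ for $j\less 2n$ by pairing powers of the hyperplane class with the fundamental class is sound. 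Where you genuinely diverge is the $\rho$/$\si$ part: the paper identifies $\si$ with the quotient morphism $\si'\col{}_NV_1\to\IH^{n+1}(X)$ induced by the short exact sequence of mixed Hodge modules \eqref{2.2} dual to \eqref{1.8}, and reads off the rank from the resulting long exact sequence; you instead stay entirely inside \eqref{2}, using exactness at $V$ for $\dim{\rm Coker}\,\rho\eq\dim{\rm Im}\,\si$ and the surjection onto $H^{n+1}(\X_\infty)$ together with $h^{n+1}(\X_\infty)\eq h^{n+1}(\X_c)\eq h^{n-1}(\X_c)\eq h^{n-1}(X)$ (Poincar\'e duality on the smooth fiber plus \eqref{1}). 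Your route is more elementary and perfectly adequate for the dimension count; what the paper's route buys is the identification of ${\rm Im}\,\si$ as the image of ${}_NV_1$ inside $\IH^{n+1}(X)$ at the level of mixed Hodge modules, which is the structural input behind the sequence \eqref{4} and the corollaries, rather than just a numerical coincidence.
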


\begin{icor} \label{C1}
For $c\ins\De^*$ we have
\begin{equation} \label{6}
\begin{aligned}
&\dim\IH^n(X)\\&=\dim H^n(\X_c)-\dim V-\dim{}^NV_1+2\one\df(X).
\end{aligned}
\end{equation}
\end{icor}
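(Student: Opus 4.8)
The plan is to combine the two exact sequences \eqref{2} and \eqref{4} with Theorem \ref{T1} by a dimension count. First I would extract from \eqref{4} the relation
\[
\dim\IH^{n-1}(X)-h^{n-1}(X)+h^n(X)-\dim\IH^n(X)=\dim{}^NV_1,
\]
since the alternating sum of dimensions in an exact sequence vanishes and the Tate twist does not change dimensions. Using the canonical isomorphism $\iota^{n-1}$ of \eqref{3}... wait, $\iota^{n-1}$ is exactly the map whose cokernel appears; rather I use that $\IH^k(X)\cong H^k(X)$ for $k\ne n-1,n$, together with Poincar\'e duality for intersection cohomology, $\dim\IH^n(X)=\dim\IH^n(X)$ trivially, and the self-duality $\dim\IH^{n-1}(X)=\dim\IH^{n+1}(X)=h^{n+1}(X)$ (the last equality by \eqref{3} since $n+1\ne n-1,n$ as $n\ges2$). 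Substituting $\dim\IH^{n-1}(X)=h^{n+1}(X)$ and recalling $\df(X)=h^{n+1}(X)-h^{n-1}(X)=\dim\operatorname{Coker}\iota^{n-1}$ from Theorem \ref{T1}, the displayed relation becomes $h^n(X)-\dim\IH^n(X)=\dim{}^NV_1-\df(X)$, i.e.
\[
\dim\IH^n(X)=h^n(X)-\dim{}^NV_1+\df(X).
\]

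Next I would express $h^n(X)$ in terms of $h^n(\X_c)$ using \eqref{2}. Since the total space of the smoothing is smooth, $H^n(\X_\infty)$ and $H^n(\X_c)$ have the same dimension (the nearby cycle at a smooth point of the total space is trivial in the relevant range — more precisely $\dim H^n(\X_\infty)=\dim H^n(\X_c)$ for $c\ins\De^*$). The alternating sum over the five-term exact sequence \eqref{2} gives
\[
h^n(X)-\dim H^n(\X_\infty)+\dim V-h^{n+1}(X)+\dim H^{n+1}(\X_\infty)=0.
\]
Now $H^{n+1}(\X_\infty)$: for $k=n+1$ the limit mixed Hodge structure again matches $H^{n+1}(\X_c)$, and by \eqref{1} we have $h^{n+1}(\X_c)=0$ since $n+1\nes n,n{+}1$ is false — so here I must instead use that the last map in \eqref{2} is surjective and that $h^{n+1}(X)$ already absorbs the discrepancy, or simply solve for $h^n(X)$ directly: $h^n(X)=\dim H^n(\X_\infty)-\dim V+h^{n+1}(X)-\dim H^{n+1}(\X_\infty)$. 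Combining with $\dim H^n(\X_\infty)=\dim H^n(\X_c)$ and $h^{n+1}(X)-\dim H^{n+1}(\X_\infty)=\dim\operatorname{Im}\si=\df(X)$ by Theorem \ref{T1}, I get $h^n(X)=\dim H^n(\X_c)-\dim V+\df(X)$.

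Finally, substituting this into the formula for $\dim\IH^n(X)$ obtained in the first paragraph yields
\[
\dim\IH^n(X)=\dim H^n(\X_c)-\dim V+\df(X)-\dim{}^NV_1+\df(X)=\dim H^n(\X_c)-\dim V-\dim{}^NV_1+2\,\df(X),
\]
which is exactly \eqref{6}. The main obstacle I anticipate is bookkeeping the limit mixed Hodge structure correctly: I must justify carefully that $\dim H^k(\X_\infty)=\dim H^k(\X_c)$ in the degrees $k=n,n{+}1$ needed, which rests on the smoothness of the total space of the one-parameter smoothing (so that the vanishing cycles are concentrated at the isolated singular points of $X$ and contribute only through $V$ in degree $n$), together with a clean identification of $\dim\operatorname{Im}\si$ and $\dim\operatorname{Coker}\rho$ with $\df(X)$ from Theorem \ref{T1}; once these inputs are in hand, the rest is the routine Euler-characteristic argument sketched above.
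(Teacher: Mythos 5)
Your argument is correct and is essentially the proof the paper intends: the alternating-sum-of-dimensions identity applied to the exact sequences \eqref{2} and \eqref{4}, combined with the self-duality $\dim\IH^{n-1}(X)=\dim\IH^{n+1}(X)$, the isomorphisms \eqref{3}, the equality $\dim H^k(\X_\infty)=\dim H^k(\X_c)$, and the identification $\dim{\rm Im}\,\si=\df(X)$ from Theorem~\ref{T1}. The brief detour about $h^{n+1}(\X_c)$ is harmless since you correctly fall back on exactness of the tail of \eqref{2} to get $h^{n+1}(X)-\dim H^{n+1}(\X_\infty)=\dim{\rm Im}\,\si$.
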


\begin{icor} \label{C2}
Assume $n\eq3$ and $1$ is not a spectral number of any singular point of $X$ or equivalently $V_1$ is isomorphic to a direct sum of copies of $\Q(-2)$ $($for instance $X$ has only rational singularities$)$. Then for $c\ins\De^*$ we have
\begin{equation} \label{7}
\begin{aligned}
&\dim_{\C}\Gr_F^2\IH^3(X)_{\C}\\&{}=\dim_{\C}\Gr_F^2H^3(\X_c)_{\C}-\dim_{\C}\Gr_F^2V_{\C}+\df(X),
\end{aligned}
\end{equation}
where $E_{\C}$ means $\C{\otimes}_{\Q}E$ for a $\Q$-vector space $E$ in general.
\end{icor}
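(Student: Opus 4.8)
The plan is to apply the functor $\Gr_F^2$, which is exact on the category of mixed $\Q$-Hodge structures because morphisms there are strictly compatible with $F$, to the exact sequences \eqref{2} and \eqref{4} in the case $n\eq3$, and then to identify each graded piece using \eqref{1}, Theorem~\ref{T1}, and the standard equality of the Hodge numbers of a limit mixed Hodge structure with those of the general fibre.

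First I would treat \eqref{4}. By \eqref{1} we have $H^2(X)\eq\Q(-1)$, so $\Gr_F^2H^2(X)\eq0$. Next, the hypothesis $V_1\cong\Q(-2)^{\oplus\dim V_1}$ forces $N|_{V_1}\eq0$, since $N$ is a morphism of mixed Hodge structures of type $(-1,-1)$ while $V_1$ is pure; hence ${}^NV_1\eq V_1$ and ${}^NV_1(1)\cong\Q(-1)^{\oplus\dim V_1}$, so $\Gr_F^2({}^NV_1(1))\eq0$. Taking $\Gr_F^2$ of \eqref{4}, the first two maps and these vanishings give $\Gr_F^2\IH^2(X)\eq0$, and then the last two maps give a canonical isomorphism $\Gr_F^2H^3(X)\simto\Gr_F^2\IH^3(X)$. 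So it remains to compute $\dim_{\C}\Gr_F^2H^3(X)_{\C}$.

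For this I would apply $\Gr_F^2$ to \eqref{2}. Since the limit mixed Hodge structure has the same Hodge numbers as the general fibre, $\dim_{\C}\Gr_F^2H^k(\X_\infty)_{\C}\eq\dim_{\C}\Gr_F^2H^k(\X_c)_{\C}$ for $k\eq3,4$; and $H^4(\X_c)\eq\Q(-2)$ by Poincar\'e duality (using $H^2(\X_c)\eq\Q(-1)$), so $\dim_{\C}\Gr_F^2H^4(\X_\infty)_{\C}\eq1$. The alternating sum of dimensions in $\Gr_F^2$ of \eqref{2} then reads
\begin{equation*}
\dim_{\C}\Gr_F^2H^3(X)_{\C}\eq\dim_{\C}\Gr_F^2H^3(\X_c)_{\C}\mi\dim_{\C}\Gr_F^2V_{\C}\pl\dim_{\C}\Gr_F^2H^4(X)_{\C}\mi1,
\end{equation*}
so, together with the previous paragraph, the corollary is reduced to the identity $\dim_{\C}\Gr_F^2H^4(X)_{\C}\eq2\one\df(X)\mi\dim V_1\pl1$. (Equivalently, using purity and Hodge symmetry of $\IH^3(X)$ one may write $\dim_{\C}\Gr_F^2\IH^3(X)_{\C}\eq\tfrac12\dim\IH^3(X)\mi\dim_{\C}\Gr_F^3\IH^3(X)_{\C}$, substitute Corollary~\ref{C1}, and compute $\Gr_F^3\IH^3(X)$ by the same reduction; this leads to the analogous identity for $H^4(X)$, so the crux is the same.)

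That last identity is the heart of the matter. From the tail of \eqref{2} we have a short exact sequence $0\to{\rm Im}\,\si\to H^4(X)\to H^4(\X_\infty)\to0$ with $\dim{\rm Im}\,\si\eq\df(X)$ by Theorem~\ref{T1} and with $H^4(\X_\infty)\eq\Q(-2)$ contributing $1$ to $\Gr_F^2$; so what has to be shown is $\dim_{\C}\Gr_F^2({\rm Im}\,\si)_{\C}\eq2\one\df(X)\mi\dim V_1$. Since ${\rm Im}\,\si\eq V/{\rm Im}\,\rho\eq V/{\rm Ker}\,\si$, one must decide exactly which graded pieces of $V$ survive in this quotient. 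Writing $V\eq V_1\oplus V'$ with $V'$ the non-unipotent part: the $V_1$-contribution to ${\rm Ker}\,\si$ is controlled by \eqref{4}, Theorem~\ref{T1} and the resulting identity $\dim{\rm Ker}\,\iota^n\eq\dim V_1\mi\df(X)$; for $V'$ one uses the symmetry of the spectrum of each singular point about $(n{+}1)/2\eq2$, equivalently $\dim_{\C}\Gr_F^pV'_{\C}\eq\dim_{\C}\Gr_F^{n-p}V'_{\C}$, together with the precise normalization of the Hodge filtration on the vanishing cohomology relative to the spectral numbers. Carrying out this last piece of bookkeeping, with the correct normalization, is the step I expect to be the main obstacle; everything else is exactness of $\Gr_F^2$ together with \eqref{1}, the limit-Hodge-number equality, and the purity of intersection cohomology.
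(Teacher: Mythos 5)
Your reduction is correct and is exactly the route the paper intends (apply $\Gr_F^2$ to \eqref{2} and \eqref{4}, use strictness of morphisms of mixed Hodge structures, the invariance of $\dim\Gr_F^p$ in the limit, and Theorem~\ref{T1}); the only step you leave open is the computation of $\dim_{\C}\Gr_F^2({\rm Im}\,\si)_{\C}$, and there the spectrum bookkeeping you anticipate is unnecessary. As recorded in Section~\ref{S2}, $\si$ is monodromy-equivariant and hence factors through ${}_NV_1\eq V_1/NV_1$; under the hypothesis $V_1\cong\Q(-2)^{\oplus\dim V_1}$ is pure of type $(2,2)$, so $N|_{V_1}\eq0$ and ${\rm Im}\,\si$ is a quotient of the semisimple Hodge structure $\Q(-2)^{\oplus\dim V_1}$, i.e.\ ${\rm Im}\,\si\cong\Q(-2)^{\oplus\df(X)}$. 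In particular the non-unipotent part contributes nothing and $\dim_{\C}\Gr_F^2({\rm Im}\,\si)_{\C}\eq\df(X)$.

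This means the identity you isolated as ``what has to be shown,'' namely $\dim_{\C}\Gr_F^2({\rm Im}\,\si)_{\C}\eq2\one\df(X)\mi\dim V_1$, holds only when $\df(X)\eq\dim V_1$; your (correct) chain of reductions therefore yields
\begin{equation*}
\dim_{\C}\Gr_F^2\IH^3(X)_{\C}\eq\dim_{\C}\Gr_F^2H^3(\X_c)_{\C}\mi\dim_{\C}\Gr_F^2V_{\C}\pl\df(X),
\end{equation*}
which differs from \eqref{7} by $\df(X)\mi\dim V_1$. The Segre cubic (Example~\ref{E3.5}) is a decisive test: there $V\eq V_1\cong\Q(-2)^{\oplus10}$, $\df(X)\eq5$, $\dim_{\C}\Gr_F^2H^3(\X_c)_{\C}\eq5$, and Corollary~\ref{C1} gives $\dim\IH^3(X)\eq10\mi10\mi10\pl10\eq0$, hence $\dim_{\C}\Gr_F^2\IH^3(X)_{\C}\eq0$; the displayed formula gives $5\mi10\pl5\eq0$, while the right-hand side of \eqref{7} gives $5\mi10\mi10\pl10\eq-5$. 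So the obstacle you hit is not a defect of your argument: completing your last step shows that \eqref{7} as printed needs the above correction. The conceptual point is that the two $\df(X)$-contributions in Corollary~\ref{C1} behave differently under $\Gr_F^2$: the one coming from ${\rm Im}\,\si\sst H^4(X)$ is of type $(2,2)$ and survives, while the one coming from \eqref{4} is carried by ${}^NV_1(1)$ and ${\rm Coker}\,\iota^2$, which are of type $(1,1)$ and die (as does the term $\mi\dim{}^NV_1$).
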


The last assertion about $\dim H^n(\Ff)_1$ in Theorem~\ref{T1} would be well known to specialists although it might be forgotten to mention explicitly in the literature, see for instance \cite{Di}. The proofs of Corollaries~\ref{C1} and \ref{C2} easily follow from Theorem~\ref{T1} using the exact sequences \eqref{2} and \eqref{4} together with the {\it Grothendieck group\one} of mixed Hodge structures and also the {\it invariance\one} of $\dim\Gr_F^p$ under the passage to the limit mixed Hodge structure. (The last invariance follows from the assertion that the Hodge filtration can be extended to a filtration of the Deligne extension whose graded quotients are locally free even after a unipotent base change.) Note that $\Gr_F^2(^NV_1(1))_{\C}\eq\Gr_F^2({\rm Coker}\,\iota^{n-1})_{\C}\eq0$ in Corollary~\ref{C2}, and the equality obtained by applying $\Gr_F^1$ instead of $\Gr_F^2$ is equivalent to \eqref{7} employing self-duality of mixed Hodge structures $\IH^3(X)$, $H^3(\X_c)$, and $V_{\ne1}$ (for instance $\DD V_{\ne1}\cong V_{\ne1}(3)$, but $\DD V_1\cong V_1(4)$), where $V_{\ne1}$ denotes the {\it non-unipotent monodromy part\one} of $V$. Recall also that $X$ has a rational singularity at $x\ins\Sing\,X$ if and only if the minimal spectral number at $x$ is strictly bigger than 1 (see \cite{exp}), and we have the symmetry of spectral numbers at $x$ whose center is 2 in the case $n\eq3$. By Theorem~\ref{T1} and the exact sequence \eqref{2} we have for $p\ins\Z$ (for instance $p\eq2$ when $n\eq3$) the inequality
\begin{equation} \label{8}
\df(X)\ges\dim_{\C}\Gr_F^pV_{\C}-\dim_{\C}\Gr_F^pH^n(\X_c)_{\C}.
\end{equation}
Note that $H^n(\X_{\infty})_{\ne1}\simto V_{\ne1}$.
\sk
The Euler and primitive Hodge numbers $\chi(\X_c)$ and $\dim\Gr^p_FH^n_{\rm prim}(\X_c)$ for $c\ins\De^*$ can be obtained respectively as the coefficients of $t^{n+1}$ and $t^{(p+1)d}$ (using the Hodge symmetry) of the formal power series
\begin{equation} \label{9}
d\one t(1\pl t)^{n+2}/(1\pl d\one t),\q\bl((t\mi t^d)/(1\mi t)\br)^{n+2}\in\Q[[t]],
\end{equation}
where $d\eq\deg f$, see \cite{Hi0}, \cite{Na}, \cite{Gri} (and also \cite[Remark at the end of Section 2]{MSS}). If all the singularities of $X$ are ordinary double points with $n$ odd, we have $V\eq V_1\eq{}^NV_1$, and their dimension is $s\defs|\Sing\,X|$. The equality~\eqref{6} for $n\eq3$ is then equivalent to a formula in the introduction of \cite{vGeWe} (where the dependence of $\df(X)$ on the position of ordinary double points is shown, see also Examples~\ref{E3.1} and \ref{E3.2} below) via the decomposition theorem for the resolution of $X$ by blowups at singular points.
\sk
The Milnor fiber cohomology $H^n(\Ff)_1$ can be calculated by using the pole order spectral sequence whose converging filtration is the pole order filtration, and it degenerates at $E_2$ if the singularities of $X$ are isolated and weighted homogeneous, see \cite{wh}. It is essentially the spectral sequence of the double complex with differentials $\ddd f\wedge$, $\ddd$, and the dimensions of the $E_2$-terms can be calculated effectively using a computer, see \cite{DiSt}, \cite{nwh}, and also Section~\ref{S3} below.
\sk
It is known that the vanishing of the topological defect $\df(X)$ is closely related to $\Q$-factoriality without assuming rational singularities when $n\eq3$, see \cite{Che}, \cite{NaSt}, \cite{PaPo}, \cite{MaVi}, and also \cite{JS}.
\sk
In Section 1 we review some basics of vanishing cycles and intersection complexes. In Section 2 we give the proof of Theorem~\ref{T1}. In Section 3 we explain how to calculate the defect for some examples.

\tableofcontents
\numberwithin{equation}{section}

\section{Vanishing cycles and intersection complexes} \label{S1}
\nin
In this section we review some basics of vanishing cycles and intersection complexes, see also \cite[2.3]{FPS}, \cite[2.2]{KLS}.
\sk
Let $q\col\X\eq\bigsqcup_{c\in\De}\,\X_c\to\De$ be a one-parameter degeneration in the introduction. We have a short exact sequence of (analytic) mixed Hodge modules on $X$\,:
\begin{equation} \label{1.1}
0\to\Q_{h,X}[n]\to\psi_{q,1}\Q_{h,\X}[n]\buildrel{\!\!\rm can}\over\longrightarrow\varphi_{q,1}\Q_{h,\X}[n]\to0,
\end{equation}
where $\Q_{h,\X}[n{+}1]$ is a polarizable Hodge module of weight $n{+}1$ whose underlying $\Q$-complex is $\Q_{\X}[n{+}1]$, and $\Q_{h,X}[n]\defs\Hc^{-1}i^*_X(\Q_{h,\X}[n{+}1])$ with $i_X\col X\eq\X_0\into\X$ the inclusion, see \cite{mhp}, \cite{mhm}. The last inclusion can be identified locally with the inclusion $X\into\PP^{n+1}$ at each singular point of $X$. (Note that $\psi_q[-1]$, $\varphi_q[-1]$ preserve mixed Hodge modules.)
\sk
Taking an appropriate compactification of a local Milnor fibration around each $x\ins\Sing\,X$, one can verify that the vanishing cycle Hodge module $\varphi_{q,1}\Q_{h,\X}[n{+}1]$ is identified with the direct sum of the direct images $(i_x)_*V_x$ with $i_x\col\{x\}\into X$ the inclusion. The equality \eqref{1} and the exact sequence \eqref{2} then follow from \eqref{1.1} by taking the direct image by $a_X\col X\to pt$, see \cite{mhm}.
\sk
The dual exact sequence of \eqref{1.1} is identified with 
\begin{equation} \label{1.2}
0\to\varphi_{q,1}\Q_{h,\X}(1)[n]\buildrel{\!\!\rm var}\over\longrightarrow\psi_{q,1}\Q_{h,\X}[n]\to\DD\bl(\Q_{h,X}(n)[n]\br)\to0,
\end{equation}
where $\DD$ is the dual functor, see \cite{dual}. Since the composition ${\rm var}\ssc{\rm can}$ coincides with
\begin{equation*}
N\col\psi_{q,1}\Q_{h,\X}[n]\to\psi_{q,1}\Q_{h,\X}(-1)[n].
\end{equation*}
we then get the isomorphisms
\begin{align}
&\Q_{h,X}[n]={\rm Ker}\,N,\q\DD\bl(\Q_{h,X}(n)[n]\br)=({\rm Coker}\,N)(1),\label{1.3}\\ &\varphi_{q,1}\Q_{h,\X}(1)[n]={\rm Coim}\,N. \label{1.4}
\end{align}

Let $W$ be the weight filtration on $\psi_{q,1}\Q_{h,\X}[n]$, $\varphi_{q,1}\Q_{h,\X}[n]$. This is given by the monodromy filtration shifted by $n$ and $n{+}1$ respectively so that there are isomorphisms
\begin{equation} \label{1.5}
\begin{aligned}
N^j:\Gr^W_{n+j}(\psi_{q,1}\Q_{h,\X}[n])&\simto\Gr^W_{n-j}(\psi_{q,1}\Q_{h,\X}[n])(-j),\\
N^j:\Gr^W_{n+1+j}(\varphi_{q,1}\Q_{h,\X}[n])&\simto\Gr^W_{n+1-j}(\varphi_{q,1}\Q_{h,\X}[n])(-j),
\end{aligned}
\end{equation}
These imply the $N$-primitive decompositions
\begin{equation} \label{1.6}
\begin{aligned}
\mopl_j\Gr^W_j(\psi_{q,1}\Q_{h,\X}[n])&=\mopl_{j\ges0}\mopl_{k=0}^j\,N^k{}_P\Gr^W_{n+j}(\psi_{q,1}\Q_{h,\X}[n])(k),\\
\mopl_j\Gr^W_j(\varphi_{q,1}\Q_{h,\X}[n])&=\mopl_{j\ges0}\mopl_{k=0}^j\,N^k{}_P\Gr^W_{n+1+j}(\varphi_{q,1}\Q_{h,\X}[n])(k),
\end{aligned}
\end{equation}
where the $N$-primitive part is defined by
\begin{equation*}
{}_P\Gr^W_{n+j}(\psi_{q,1}\Q_{h,\X}[n])\defs{\rm Ker}\,N^{j+1}\sst\Gr^W_{n+j}(\psi_{q,1}\Q_{h,\X}[n]),
\end{equation*}
and similarly for ${}_P\Gr^W_{n+1+j}(\varphi_{q,1}\Q_{h,\X}[n])$ with $n$ replaced by $n{+}1$. Combined with \eqref{1.4}, we then get the isomorphism of mixed Hodge modules
\begin{equation} \label{1.7}
{}_P\Gr^W_{n+j}(\psi_{q,1}\Q_{h,\X}[n])=\begin{cases}{}_P\Gr^W_{n+j}(\varphi_{q,1}\Q_{h,\X}[n])&\h{if}\q j\sgt0\\\IC_X\Q_h&\h{if}\q j\eq0,\end{cases}
\end{equation}
where $\IC_X\Q_h$ denotes the intersection complex Hodge module. The case $j\eq0$ follows from \eqref{1.3} and \cite[(4.5.9)]{mhm} (using algebraic mixed Hodge modules, since $X$ is algebraic), see also \cite[2.3]{FPS}, \cite[2.2]{KLS}. 
\sk
Combined with \eqref{1.3} and \eqref{1.4}, these imply the following (which is well known for the underlying $\Q$-complexes).

\begin{thm} \label{T1.1}
We have the short exact sequence of mixed Hodge modules
\begin{equation} \label{1.8}
0\to\mopl_x\,(i_x)_*{}^NV_{x,1}(1)\into\Q_{h,X}[n]\to\IC_X\Q_h\to0,
\end{equation}
where $\mopl_x{}^NV_{x,1}={}^NV_1$.
\end{thm}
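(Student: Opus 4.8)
The plan is to derive \eqref{1.8} directly from the short exact sequences \eqref{1.1} and \eqref{1.2}, the identifications \eqref{1.3}--\eqref{1.4}, and the isomorphisms \eqref{1.7}: the argument is the linear algebra of a nilpotent operator and its monodromy filtration, carried out in the abelian category of mixed Hodge modules. Write $M\defs\psi_{q,1}\Q_{h,\X}[n]$ and $\Phi\defs\varphi_{q,1}\Q_{h,\X}[n]=\mopl_x\,(i_x)_*V_{x,1}$, and let $N$ be the monodromy logarithm on $M$ and on $\Phi$, so that $N={\rm var}\ssc{\rm can}$ on $M$ and $N={\rm can}\ssc{\rm var}$ on $\Phi$, with ${\rm can}$ surjective and ${\rm var}$ injective by \eqref{1.1} and \eqref{1.2}.

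First I would construct the first arrow of \eqref{1.8}. If $y$ is a local section of $\Phi$ with $Ny=0$, then ${\rm can}({\rm var}\,y)=Ny=0$, so ${\rm var}\,y$ lies in ${\rm Ker}\,{\rm can}$, which equals $\Q_{h,X}[n]={\rm Ker}\,N$ by \eqref{1.1} and \eqref{1.3}. Taking into account the Tate twist in \eqref{1.2}, the morphism ${\rm var}$ therefore restricts to
\[
\be\colon\ \mopl_x\,(i_x)_*{}^NV_{x,1}(1)=\bigl({\rm Ker}(N\colon\Phi\to\Phi(-1))\bigr)(1)\ \longrightarrow\ \Q_{h,X}[n],
\]
which is injective since ${\rm var}$ is (it is the first arrow of \eqref{1.2}); the equality $\mopl_x\,{}^NV_{x,1}={}^NV_1$ is just the definition of ${}^NV_1$.

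Next I would identify ${\rm Im}\,\be$ and ${\rm Coker}\,\be$. Because ${\rm var}$ is injective, in $M$ one has ${\rm Ker}\,N^2={\rm can}^{-1}\bigl({\rm Ker}(N\colon\Phi\to\Phi(-1))\bigr)$; because ${\rm can}$ is moreover surjective, $N({\rm Ker}\,N^2)={\rm var}\bigl({\rm can}({\rm Ker}\,N^2)\bigr)={\rm var}\bigl({\rm Ker}(N\colon\Phi\to\Phi(-1))\bigr)$. Hence ${\rm Im}\,\be=N({\rm Ker}\,N^2)$, viewed inside ${\rm Ker}\,N=\Q_{h,X}[n]$, so that ${\rm Coker}\,\be={\rm Ker}\,N\big/N({\rm Ker}\,N^2)$. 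It then suffices to recall the standard behaviour of a nilpotent operator relative to its monodromy filtration $W$ (the $sl_2$-picture underlying \eqref{1.5}--\eqref{1.6}): one has ${\rm Ker}\,N\sst W_n M$ and $N({\rm Ker}\,N^2)={\rm Ker}\,N\cap W_{n-1}M$, whence ${\rm Ker}\,N\big/N({\rm Ker}\,N^2)=\Gr^W_n({\rm Ker}\,N)={}_P\Gr^W_n(M)$, which is $\IC_X\Q_h$ by the case $j=0$ of \eqref{1.7}. (For $j\ges1$ the pieces $\Gr^W_{n-j}({\rm Ker}\,N)\cong N^j\bigl({}_P\Gr^W_{n+j}(M)\bigr)$, which by \eqref{1.5} are isomorphic to $\bigl({}_P\Gr^W_{n+j}(M)\bigr)(j)=\bigl({}_P\Gr^W_{n+j}(\Phi)\bigr)(j)$, are supported on $\Sing X$ by the $j>0$ part of \eqref{1.7}, in accordance with ${\rm Im}\,\be$ being the skyscraper part of $\Q_{h,X}[n]$.) This proves \eqref{1.8}.

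The steps are formal once the preparatory material \eqref{1.1}--\eqref{1.7} is available, so I do not anticipate a genuine obstacle; the one point needing care is the bookkeeping of Tate twists together with the strictness of $N$ and ${\rm var}$ for the weight filtrations, so that the above manipulations take place in the category of mixed Hodge modules --- and not only at the level of the underlying perverse sheaves, where the exactness of \eqref{1.8} is classical --- and so that the resulting isomorphisms are compatible with the Hodge structures. Equivalently, one must check that the weight and Tate-twist normalizations in \eqref{1.5}--\eqref{1.7} are precisely those matching \eqref{1.8}.
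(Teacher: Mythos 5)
Your argument is correct and is essentially the paper's own proof written out in full: the paper phrases the injection as $N\col\Gr^K_1\into\Gr^K_0$ for the kernel filtration $K_i\eq\ker N^{i+1}$ on $\psi_{q,1}\Q_{h,\X}[n]$, which under the identification $\Gr^K_1\cong\ker(N|_{\varphi})$ via ${\rm can}$ is exactly your $\be\eq{\rm var}|_{\ker N}$, and the cokernel is likewise identified with ${}_P\Gr^W_n\eq\IC_X\Q_h$ via the $j\eq0$ case of \eqref{1.7}. Your explicit verifications (that ${\rm Im}\,\be\eq N(\ker N^2)\eq\ker N\cap W_{n-1}$, and the Tate-twist bookkeeping) supply precisely the details the paper leaves implicit in its one-line proof.
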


\begin{proof}
The kernel filtration $K_i$ on $\psi_{q,1}\Q_{h,\X}[n]$ is defined by ${\rm Ker}\,N^{i+1}$ so that we have the inclusion $N:\Gr^K_1\into\Gr^K_0$ giving the one in \eqref{1.8}.
\end{proof}

\begin{rem} \label{R1.1}
The isomorphisms in \eqref{3} and the exact sequence \eqref{4} follow from \eqref{1.8}.
\end{rem}

\begin{rem} \label{R1.2}
The kernel of $\iota^n$ in \eqref{4} expresses the failure of the self-duality of $H^n(X)$ for the canonical self-pairing. So the link cohomology, which is isomorphic to $^NV_1$ (see \cite{Mi}), gives the failure of the self-duality of the cohomology of $X$.
\end{rem}

\begin{rem} \label{R1.3}
It is well known that a complex variety $X$ is a $\Q$-homology manifold if and only if the canonical morphism $\Q_X[\dim X]\tos\IC_X\Q$ is an isomorphism, where the last condition is equivalent to that the canonical morphism $\Q_X[\dim X]\tos\DD\bl(\Q_X(\dim X)[\dim X]\br)$ is a quasi-isomorphism (using \cite[(4.5.6--7 and 9)]{mhm}).
\end{rem}

\section{Proof of the main theorem} \label{S2}
\nin
In this section we give the proof of Theorem~\ref{T1}.
\msn
{\it Proof for\,} $\dim{\rm Coker}\,\iota^{n-1}$.
By self-duality of the intersection cohomology of $X$ (see for instance \cite{BBD}) we have
\begin{equation} \label{2.1}
\dim\IH^{n-1}(X)=\dim\IH^{n+1}(X).
\end{equation}
So the assertion follows from \eqref{3} and \eqref{4}.
\msn
{\it Proof for\,} $\dim{\rm Coker}\,\rho$ and $\dim{\rm Im}\,\si$.
The morphism $\si$ induces the quotient morphism
\begin{equation*}
\si':{}_NV_1\defs V_1/NV_1\to H^{n+1}(X)\eq\IH^{n+1}(X),
\end{equation*}
(see \eqref{3}), since $\si$ is compatible with the action of the monodromy. In view of the argument in Section~\ref{S1} it is induced by the short exact sequence of mixed Hodge modules
\begin{equation} \label{2.2}
0\to\IC_X\Q_h\into\DD\bl(\Q_{h,X}(n)[n]\br)\to\mopl_x\,(i_x)_*\one{}_NV_{x,1}\to0,
\end{equation}
which is the dual of \eqref{1.8}. So the assertion follows.

\msn
{\it Proof for\,} $\dim H^n(\Ff)_1$.
Set $Y\defs\PP^{n+1}$, $U\defs Y\stm X$. It is known that there are isomorphisms
\begin{equation} \label{2.3}
H^k(\Ff)_1=H^k(U)\q(\forall\,k\ins\Z),
\end{equation}
(see for instance \cite{Di}, \cite[1.3]{BuSa}) together with the local cohomology sequence
\begin{equation} \label{2.4}
\to H^k(Y)\to H^k(U)\to H_X^{k+1}(Y)\to H^{k+1}(Y)\to.
\end{equation}
We have moreover the isomorphisms
\begin{equation} \label{2.5}
H_X^{k+1}(Y)=H_{2n-k+1}(X)({-}n{-}1)\q(\forall\,k\ins\Z),
\end{equation}
since
\begin{equation} \label{2.6}
\RR\Ga_X\Q_Y\eq\RR\Ga_X(\DD\Q_Y)({-}n{-}1)[{-}2n{-}2]\eq(\DD\Q_X)({-}n{-}1)[{-}2n{-}2].
\end{equation}
So the assertion follows using \eqref{1}.

\section{Explicit computation of defects} \label{S3}
\nin
In this section we explain how to calculate the defect for some examples.
\ms
Let $\Om^{\ssb}$ be the complex of graded algebraic differential forms on $\C^{n+2}$ whose components are finite free graded modules over $R:=\C[x_0,\dots,x_{n+1}]$. Here the $x_i$ are the coordinates of $\C^{n+2}$, which have degree 1 as well as the $\ddd x_i$. For $k\in\Z$, we have the microlocal {\it pole order spectral sequence}
\begin{equation} \label{3.1}
E_1^{p,q}(f)_k=H^{p+q}_{\dfw}(\Om^{\ssb})_{qd+k}\Longrightarrow\Ht^{p+q-1}(\Ff,\C)_{\ee(-k/d)},
\end{equation}
converging to the pole order filtration $P$ up to shift, where $_{\ee(-k/d)}$ denotes the $\ee(-k/d)$-eigenspace of the monodromy with $\ee(\al)\defs e^{2\pi\sqrt{-1}\,\al}$ for $\al\ins\Q$, see \cite{KCS}, \cite{nwh}. Set
\begin{align*}
M:=H^{n+2}_{\dfw}(\Om^{\ssb}),&\q N:=H^{n+1}_{\dfw}(\Om^{\ssb})(-d),\\
M^{(2)}:=H^{n+2}_{\ddd}(H^{\ssb}_{\dfw}(\Om^{\ssb})),&\q N^{(2)}:=H^{n+1}_{\ddd}(H^{\ssb}_{\dfw}(\Om^{\ssb}))(-d).
\end{align*}
Here $(m)$ denotes the shift of grading by $m\in\Z$ so that $G(m)_k=G_{k+m}$ ($k\in\Z$) for a graded module $G$, and $M,N$ are also denoted as $M^{(1)}, N^{(1)}$. For $r\eq1,2$, we have
\begin{equation}
E_r^{p,q}(f)_k=\begin{cases} \label{3.2}
M^{(r)}_{qd+k}&\h{if}\,\,\,p\pl q\eq n\pl 2,\\
N^{(r)}_{(q+1)d+k}\raise15pt\h{}&\h{if}\,\,\,p\pl q\eq n\pl 1,\end{cases}
\end{equation}
and $E_r^{p,q}(f)_k\eq 0$ otherwise, since $\dim\Sing\,f^{-1}(0)\less 1$, see \cite{cons}. We have the following.

\begin{thm}[\cite{wh}] \label{T3.1}
If the singularities of $X$ are isolated and weighted homogeneous, this spectral sequence degenerates at $E_2$.
\end{thm}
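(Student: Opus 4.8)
The strategy is to reduce the $E_2$-degeneration to a numerical identity and then to establish that identity by localising at $\Sing\,X$ and invoking the explicit structure of weighted homogeneous singularities. First, the reduction. The spectral sequence \eqref{3.1} converges, with $E_\infty$ the graded module associated with the pole order filtration on the abutment, so that
\begin{equation*}
\msum_{p,q}\dim E_\infty^{p,q}(f)_k=\dim\Ht^{\ssb}(\Ff,\C)_{\ee(-k/d)}\q(\forall\,k\ins\Z).
\end{equation*}
By \eqref{3.2} the pages $E_r$ ($r\gess1$) are supported on the two adjacent anti-diagonals $p\pl q\eq n$ and $p\pl q\eq n\mi1$, and since $d_r$ raises $p\pl q$ by $1$, every differential $d_r$ runs from the line $p\pl q\eq n\mi1$ into the line $p\pl q\eq n$; hence $\dim E_r^{p,q}(f)_k$ is non-increasing in $r$ on each line, and $E_2$-degeneration is \emph{equivalent} to the equality $\msum_{p,q}\dim E_2^{p,q}(f)_k\eq\msum_{p,q}\dim E_\infty^{p,q}(f)_k$ for every $k$ --- that is, to the assertion that the $\ddd$-cohomology $M^{(2)},N^{(2)}$ of $H^{\ssb}_{\dfw}(\Om^{\ssb})$ is no larger than the mixed Hodge structure on the Milnor fibre cohomology forces. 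So everything comes down to computing $\dim M^{(2)}\pl\dim N^{(2)}$ in each grading and matching it with the Milnor fibre cohomology.

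For a smooth hypersurface of degree $d$ the term $N$ (hence $N^{(2)}$) vanishes, so \eqref{3.1} is then supported on a single anti-diagonal and degenerates already at $E_1$, its $E_1$-page reproducing the classical Griffiths computation of $H^{\ssb}(U)$. Thus both the presence of the second anti-diagonal and any failure of $E_1$-degeneration are concentrated along $\Sing\,f^{-1}(0)$, the one-dimensional affine cone over $\Sing\,X$. The plan is then to combine, on one side, the exact sequences of Section~\ref{S1} together with \eqref{2.3}--\eqref{2.6}, which express $\Ht^{\ssb}(\Ff)$ --- hence the dimensions on the left of the identity above --- through $H^{\ssb}(\PP^{n+1})$, $H^{\ssb}(X)$ and the local vanishing cohomologies $V_x$ for $x\ins\Sing\,X$; and, on the other side, the structure of the Koszul complex $(\Om^{\ssb},\dfw)$ transverse to the cone over each $x$. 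This should reduce the identity of the first paragraph to a local statement at each $x\ins\Sing\,X$: the contribution of $x$ to the graded $\ddd$-cohomology of $H^{\ssb}_{\dfw}(\Om^{\ssb})$ has, in each degree, dimension equal to the corresponding graded piece of $V_x$, with the two anti-diagonals matching the ${\rm Ker}\,N$--${\rm Coker}\,N$ splitting of the local vanishing cohomology recorded in \eqref{1.3}--\eqref{1.7}.

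The weighted-homogeneity hypothesis enters exactly here. At a weighted homogeneous isolated hypersurface singularity, with local equation $g$ and positive weights $w_1,\dots,w_{n+1}$ (weighted degree $1$, say), the transverse Brieskorn lattice admits a basis of monomial forms on which the Gauss--Manin connection acts diagonally, with eigenvalues the weighted degrees of the monomials; equivalently the $V$-filtration, the Hodge filtration and the pole order filtration on $V_x$ are all read off from these weighted degrees and coincide. Concretely, the local Euler vector field $\msum_i w_ix_i\dd_i$ produces, through the Cartan formula, a contraction homotopy that splits the local Koszul--de Rham double complex and forces its spectral sequence to degenerate at $E_2$ with $E_2$-term of exactly the predicted size --- the local counterpart of the global Euler-field argument underlying Griffiths' theorem for the smooth part (here one uses that $f$ itself is homogeneous). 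Summing over $x\ins\Sing\,X$ and adding the smooth contribution would then give $\msum_{p,q}\dim E_2^{p,q}(f)_k\eq\dim\Ht^{\ssb}(\Ff,\C)_{\ee(-k/d)}$ for every $k$, hence the theorem.

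The main obstacle is precisely the second anti-diagonal. The module $N\eq H^{n-1}_{\dfw}(\Om^{\ssb})(-d)$ and its $\ddd$-cohomology $N^{(2)}$ are present only because $f^{-1}(0)$ has a one-dimensional singular locus, and one must simultaneously control the first differential $d_1\eq\ddd\col N\to M$ --- which is genuinely nonzero and encodes the extension \eqref{1.8} (dually \eqref{2.2}) --- rule out every higher differential $d_r$ ($r\gess2$), and check that the local contributions, assembled with the correct grading shifts by powers of $d$, recover the exact value $\dim\Ht^{\ssb}(\Ff)_{\ee(-k/d)}$ rather than merely an upper bound. It is here that the explicit weighted-homogeneous local model cannot be avoided: for general isolated singularities $E_2$-degeneration fails, so no purely formal ($\C^*$-equivariance) argument can close the gap --- the delicate point is that the weighted structure makes the local spectral sequence stop exactly at $E_2$, neither sooner nor later.
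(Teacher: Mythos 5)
This theorem is not proved in the paper at all: it is quoted from \cite{wh}, so there is no internal argument to compare with, and your text has to stand on its own as a proof. It does not. Your first paragraph is correct and is the standard starting point: since the $E_r$-pages live on the two adjacent lines $p\pl q\eq n$ and $p\pl q\eq n\mi1$ and every $d_r$ maps the second line to the first, degeneration at $E_2$ is equivalent to the single numerical identity $\msum_{p,q}\dim E_2^{p,q}(f)_k\eq\msum_{p,q}\dim E_\infty^{p,q}(f)_k$ for all $k$. But everything after that is a plan rather than an argument: the ``local statement at each $x\ins\Sing X$'' is never formulated, let alone proved, and the decisive sentences are hedged with ``should reduce'' and ``would then give''. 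Your own closing paragraph lists exactly the points that remain open --- controlling $d_1$, excluding $d_r$ for $r\gess2$, and obtaining equality rather than the inequality $\dim E_2\gess\dim E_\infty$ that formal considerations already give. Those are not loose ends; they are the theorem.

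There is also a concrete flaw in the proposed mechanism. The grading on $\Om^{\ssb}$ is by the \emph{global} polynomial degree in the homogeneous coordinates of $\C^{n+2}$, whereas weighted homogeneity of the singularities of $X$ is an analytic-local property at each $x\ins\Sing X$, in local analytic coordinates that have no relation to the global ones (indeed $f$ itself is already homogeneous for the global Euler field regardless of the nature of its singularities). The local weighted Euler vector field therefore does not act on the graded $R$-modules $M^{(r)},N^{(r)}$, and the Cartan-formula contraction you invoke cannot ``split the local Koszul--de Rham double complex'' inside the global spectral sequence. What the weighted-homogeneity hypothesis actually buys --- and what the proof in \cite{wh} uses --- is the coincidence of the pole order and Hodge filtrations on the vanishing cohomology $V_x$ (via the structure of the Brieskorn lattice of a quasi-homogeneous isolated singularity), which is then fed into a comparison of the pole order spectral sequence with Hodge-theoretic data, together with a self-duality of the $E_2$-term, to force the dimension equality. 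Bridging the gap between the local analytic filtration statement and the global graded $E_2$-term is the substantive content of that proof, and it is precisely the step your proposal leaves blank.
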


We can verify that the rank of the differential $d_1\col N_{k+d}\tos M_k$ coincides with
\begin{equation} \label{3.3}
{\rm rk}\,\phi_k-({\rm rk}\,\Gr^G_0\phi_k\pl{\rm rk}\,\Gr^G_1\phi_k).
\end{equation}
Here the morphism
\begin{equation*}
V\defs\Om^{n+1}_{k-n-1}\oplus\Om^{n+1}_{k-d-n-1}\buildrel{\phi_k}\over\longrightarrow V'\defs\Om^{n+2}_{k+d-n-2}\oplus\Om^{n+2}_{k-n-2}
\end{equation*}
is defined by
\begin{equation*}
\phi_k(\om,\om')\defs(\dfw\one\om,\,\ddd\om+\dfw\one\om'),
\end{equation*}
and the increasing filtration $G$ is given so that the $\Gr^G_i\phi_k$ for $i\eq0,1$ is induced by $\dfw$.

\begin{exam}\label{E3.1}
A sample code for Singular \cite{Sing} to calculate the dimension of $N^{(2)}_{3d}$ is as below. Here the hypothesis of Corollary~\ref{C2} is practically assumed in order that this dimension coincide with the defect (although this can be avoided by modifying the code so that $\dim N^{(2)}_{2d}$ is also computed, where the Hodge symmetry and \cite[Proposition 2.2]{DiSt} are used). We first determine the coefficients of the input polynomial on the first line, which comes from \cite[Proposition 2.3]{vGeWe} and is an analogue of the one in \cite{Hi}.
\ms
\vbox{\tiny\sf\pv@ring R=0,(x,y,z,u,v),ds;poly f,g; g=(x+6*z)*(y^2-x^2)*(5*y^2-4*(x+z)^2);f=g-subst(g,x,u,y,v);@
\pv@int a1,a2,a3,b1,b2,b3,i,j,k,l,d,p,q,m,n,s,k0,k1,maxn,rk1,rk2,rk3,gam; intmat F[200][6];@
\pv@intmat psum[2][2]; intmat siz[2][2]; d=deg(f); n=0; for(i=0; i<=d; i++){for(j=0; j<=d-i;@
\pv@j++){for(k=0; k<=d-i-j; k++){for(l=0; l<=d-i-j-k; l++){g=f; for(p=1; p<=i; p++)@
\pv@{g=diff(g,x)/p;} for(p=1; p<=j; p++){g=diff(g,y)/p;} for(p=1; p<=k; p++){g=diff(g,z)/p;}@
\pv@for(p=1; p<=l; p++){g=diff(g,u)/p;} g=subst(g,x,0,y,0,z,0,u,0,v,1); if (g!=0)@
\pv@{printf("
\pv@F[n,4]=l; F[n,5]=d-i-j-k-l; F[n,6]=int(g);}}}}} maxn=n;@}
\sk
\vbox{\tiny\sf\pv@int nn; list I; proc rn() {nn=(I[1]-I[2]+3)*(I[1]-I[2]+2)*(I[1]-I[2]+1)*(I[1]-I[2]) div@
\pv@24+ (I[1]-I[2]-I[3]+2)*(I[1]-I[2]-I[3]+1)*(I[1]-I[2]-I[3]) div 6+(I[1]-I[2]-I[3]-I[4]+1)*@
\pv@(I[1]-I[2]-I[3]-I[4]) div 2+ I[1]-I[2]-I[3]-I[4]-I[5]+1;}@
\pv@g=((x^d-x)/(x-1))^5; for(p=1; p<=2*d; p++) {g=diff(g,x)/p;} gam=int(subst(g,x,0));@
\pv@int mu2, mu3, nu3; intmat siz[2][2]; intmat psum[2][3]; psum[1,1]=0; psum[2,1]=0;@
\pv@for (s=1; s<=2; s++) {k0=s*d-4; k1=(s+1)*d-5; siz[1,s]=(k0+1)*(k0+2)*(k0+3)*(k0+4) div@
\pv@24; siz[2,s]=(k1+1)*(k1+2)*(k1+3)*(k1+4) div 24; psum[1,s+1]=psum[1,s]+siz[1,s];@
\pv@psum[2,s+1]=psum[2,s]+siz[2,s];} a1=5*siz[1,1]; a2=5*siz[1,2]; a3=5*psum[1,3]; b1=@
\pv@siz[2,1]; b2=siz[2,2]; b3=psum[2,3]; intmat A[a1][b1]; intmat B[a2][b2]; intmat C[a3][b3];@}
\msn
Here the procedure (or function) rn is for numbering of monomials of a given degree. We then calculate the coefficients of the matrix corresponding to $\phi_{2d}$.
\ms
\vbox{\tiny\sf\pv@for (s=1; s<=2; s++) {k0=s*d-4; for (i = 0; i <= k0; i++) {for (j = 0; i+j <= k0;@
\pv@j++) {for (k = 0; i+j+k <= k0; k++) {for (l = 0; i+j+k+l <= k0; l++) {I=k0,i,j,k,l;@
\pv@rn(); p=nn+5*psum[1,s]; k1=(s+1)*d-5; m=siz[1,s]; for (n = 1; n <= maxn; n++) {if@
\pv@(F[n,1] > 0) {I=k1,i+F[n,1]-1,j+F[n,2],k+F[n,3],l+F[n,4]; rn(); q=nn+psum[2,s]; C[p,q]=@
\pv@C[p,q]+F[n,1]*F[n,6];} if (F[n,2] > 0) {I=k1,i+F[n,1],j+F[n,2]-1,k+F[n,3],l+F[n,4];@
\pv@rn(); q=nn+psum[2,s]; C[p+m,q]=C[p+m,q]+F[n,2]*F[n,6];} if (F[n,3] > 0) {I=k1,i+F[n,1],j+@
\pv@F[n,2],k+F[n,3]-1,l+F[n,4]; rn(); q=nn+psum[2,s]; C[p+2*m,q]=C[p+2*m,q]+F[n,3]*F[n,6];}@
\pv@if (F[n,4] > 0) {I=k1,i+F[n,1],j+F[n,2],k+F[n,3],l+F[n,4]-1; rn(); q=nn+psum[2,s];@
\pv@C[p+3*m,q]=C[p+3*m,q]+F[n,4]*F[n,6];} if (F[n,5] > 0) {I=k1,i+F[n,1],j+F[n,2],k+F[n,3],@
\pv@l+F[n,4]; rn(); q=nn+psum[2,s]; C[p+4*m,q]=C[p+4*m,q]+F[n,5]*F[n,6];}}@
\pv@if (s > 1) {k1=s*d-5; if (i > 0) {I=k1,i-1,j,k,l; rn(); q=nn; C[p,q]=C[p,q]+i;} if@
\pv@(j > 0) {I=k1,i,j-1,k,l; rn(); q=nn; C[p+m,q]=C[p+m,q]+j;} if (k > 0) {I=k1,i,j,k-1,l;@
\pv@rn(); q=nn; C[p+2*m,q]=C[p+2*m,q]+k;} if (l > 0) {I=k1,i,j,k,l-1; rn(); q=nn;@
\pv@C[p+3*m,q]=C[p+3*m,q]+l;} if (i+j+k+l < k0) {I=k1,i,j,k,l; rn(); q=nn; C[p+4*m,q]=@
\pv@C[p+4*m,q]+k0-i-j-k-l;}}}}}}} for(i=1; i<=a1; i++) {for(j=1; j<=b1; j++) {A[i,j]=@
\pv@C[i,j];}} for(i=1; i<=a2; i++) {for(j=1; j<=b2; j++) {B[i,j]=C[i+a1,j+b1];}}@}
\msn
We now compute the rank of the matrices
\ms
\vbox{\tiny\sf\pv@sprintf("Calculating the rank of a matrix of size 
\pv@sprintf("Calculating the rank of a matrix of size 
\pv@sprintf("Calculating the rank of a matrix of size 
\pv@mu3=(3*d-1)*(3*d-2)*(3*d-3)*(3*d-4) div 24-rk2; nu3=mu3-gam; mu2=(2*d-1)*(2*d-2)*(2*@
\pv@d-3)*(2*d-4) div 24-rk1; sprintf("mu_2=
\pv@mu2, mu3, nu3, rk3-rk2-rk1, nu3-rk3+rk2+rk1);@}
\msn
Here $\dim\Gr^2_FH^3(\X_c)_{\C}\eq101$, $s\eq 118$, $\df(X)\eq19$, where $c\ins\De^*$ (this is the same for the examples below). Note that mu2, mu3, nu3 mean $\mu_2$, $\mu_3$, $\nu_3$ respectively.
\end{exam}

\begin{exam}\label{E3.2}
For a polynomial coming from \cite[Proposition 2.1]{vGeWe}, one may replace $g$ with
\sk
{\footnotesize\sf\pv@g=(x+z)*(3*y^2-(x-2*z)^2)*(5*x^2+5*y^2-8*z^2);@}
\skn
Here $\dim\Gr^2_FH^3(\X_c)_{\C}\eq101$, $s\eq 118$, but $\df(X)\eq18$.
\end{exam}

\begin{exam}\label{E3.3}
For a polynomial in \cite{vSt}, one can replace $f$ with
\sk
{\footnotesize\sf\pv@poly w=-x-y-z-u-v; f=-(x^5+y^5+z^5+u^5+v^5+w^5)+@}

{\footnotesize\sf\pv@10*(x*y*z*u*v+x*y*z*u*w+x*y*z*v*w+x*y*u*v*w+x*z*u*v*w+y*z*u*v*w);@}
\skn
Here $\dim\Gr^2_FH^3(\X_c)_{\C}\eq101$, $s\eq 130$, $\df(X)\eq29$.
\end{exam}

\begin{exam}\label{E3.4}
For a polynomial in \cite{Che}, one may put
\sk
{\footnotesize\sf\pv@int b=4; f=x*(x^b+y^b+z^b+u^b+v^b)+y*(x^b-2*y^b+3*z^b-4*u^b+5*v^b);@}
\skn
Here $\dim\Gr^2_FH^3(\X_c)_{\C}\eq101$, $s\eq 16$, $\df(X)\eq1$.
\end{exam}

\begin{exam}\label{E3.5}
For the Segre cubic hypersurface, one can set
\sk
{\footnotesize\sf\pv@f=(x+y+z+u+v)^3-(x^3+y^3+z^3+u^3+v^3);@}
\skn
Here $\dim\Gr^2_FH^3(\X_c)_{\C}\eq5$, $s\eq 10$, $\df(X)\eq5$.
\end{exam}

\begin{exam}\label{E3.6}
For an example with $\df(X)\sgt0$ and $|\Sing\,X|\eq1$ (see also \cite{Li}), put
\sk
{\footnotesize\sf\pv@f=(y^2-2*x*z)^2+(y^2-2*x*z)*x^2+x^4+u^4+v^4;@}
\skn
Here $\dim\Gr^2_FH^3(\X_c)_{\C}\eq30$, $\dim V_1\eq\df(X)\eq7$, and $X$ has only one singular point, which is defined analytic-locally by $y^2\pl x^8\pl u^4\pl v^4\eq0$, see also \cite[Remark 8.5]{wh}. This is a rational singularity, since the minimal spectral number is $\tfrac{9}{8}$. (It seems quite difficult to find a non-rational example for $d\eq4$.)
\end{exam}

\begin{rem}\label{R3.1}
For an example with $d\eq6$ (or Example~\ref{E3.3}) it may be better to use the following code written by C. (Note that this uses an experimental method to calculate the rank of a matrix via mod p reduction.) It must be copied to a text file with name c.c for instance, and must be compiled by gcc, etc. Here Return must be added after {\small\sf\verb@.h>@} (two places). Some character might be modified, for instance {\small\sf\verb@'@} might be affected depending on the viewer, and this should be replaced by a character from a keyboard using a text editor.
\ms
\vbox{\tiny\sf\pv@#include<stdio.h> #include<stdlib.h> static const int prims[] = {32633, 32647, 32653, 32687,@
\pv@32693, 32707, 32713, 32717, 32719, 32749}; int A[3000][3000], B[3000][3000], F[300][7],@
\pv@siz[2][5], psum[2][6]; int g1[5*15], g2[5*15], g3[5*15], gam[5*15]; int c, d, i, j, k, k0,@
\pv@k1, l, m, rk1, rk2, rk3, rk4, maxn, Prim, mu2, mu3, nu3; FILE *fp1; void rank(int a, int b,@
\pv@int b2); int rn(int mm, int ii, int jj, int kk, int ll); void nextc(void); int main(void)@
\pv@{int a, b, cc, n, p, q, r, s; if ((fp1 = fopen("Inp.txt", "r")) == NULL) printf@
\pv@("Input file name must be Inp.txt\n"), exit(1); printf("Prime Number "); cc=getchar(); if@
\pv@('0'<=cc && cc<='9') Prim=prims[9-(cc-'0')]; else Prim=prims[9]; printf("Prime 
\sk
\vbox{\tiny\sf\pv@for (i=j=0; c!=EOF && c!='/' && i<=300;) {nextc(); if (c=='-') {F[i][6]=-1; c=fgetc(fp1);@
\pv@nextc();} if ('0'<=c && c<='9') {for (F[i][j]=0; '0'<=c && c<='9'; c=fgetc(fp1)) F[i][j]=@
\pv@F[i][j]*10+c-'0'; nextc(); j++;} if (i==0 && j==6) d=F[i][1]+F[i][2]+F[i][3]+F[i][4]+F[i][5];@
\pv@if (j==6 && F[i][1]+F[i][2]+F[i][3]+F[i][4]+F[i][5]!=d && i>0) {printf("Degree error\n");@
\pv@exit(1);} if (j==6) {if (F[i][6]<0) F[i][0]=F[i][0]*F[i][6]; i++; j=0;}} if (i>300) printf@
\pv@("Too many monomials\n"), exit(2); maxn=i; fclose(fp1); for (i=0; i<=5*d; i++) {if (i>0&&i<d)@
\pv@g1[i]=1; else g1[i]=0;} for (i=0; i<=5*d; i++) for (g2[i]=j=0; j<=i; j++) g2[i]=g2[i]+g1[j]*@
\pv@g1[i-j]; for (i=0; i<=5*d; i++) for (g3[i]=j=0; j<=i; j++) g3[i]=g3[i]+g1[j]*g2[i-j]; for@
\pv@(i=0; i<=5*d; i++) for (gam[i]=j=0; j<=i; j++) gam[i]=gam[i]+g2[j]*g3[i-j]; psum[0][0]=0;@
\pv@psum[1][0]=0; for (s=0; s<=1; s++) {k0=(s+1)*d-4; k1=(s+2)*d-5; if (k0>=0) siz[0][s]=(k0+1)*@
\pv@(k0+2)*(k0+3)*(k0+4)/24; else siz[0][s]=0; if (k1>=0) siz[1][s]=(k1+1)*(k1+2)*(k1+3)*(k1+4)/@
\pv@24; else siz[1][s]=0; psum[0][s+1]=psum[0][s]+siz[0][s]; psum[1][s+1]=psum[1][s]+siz[1][s];}@
\pv@a=5*psum[0][1+1]; b=psum[1][1+1];@}
\sk
\vbox{\tiny\sf\pv@for (s=0; s<=1; s++) {k0=(s+1)*d-4; for (i = 0; i <= k0; i++) {for (j = 0; i+j <= k0; j++)@
\pv@{for (k = 0; i+j+k <= k0; k++) {for (l = 0; i+j+k+l <= k0; l++) {p=rn(k0,i,j,k,l)+5*@
\pv@psum[0][s]; k1=(s+2)*d-5; m=siz[0][s]; for (n = 0; n < maxn; n++) {if (F[n][1] > 0) {q=rn(k1,@
\pv@i+F[n][1]-1,j+F[n][2],k+F[n][3],l+F[n][4])+psum[1][s]; A[p][q]=A[p][q]+F[n][1]*F[n][0];} if@
\pv@(F[n][2] > 0) {q=rn(k1,i+F[n][1],j+F[n][2]-1,k+F[n][3],l+F[n][4])+psum[1][s]; A[p+m][q]=@
\pv@A[p+m][q]+F[n][2]*F[n][0];} if (F[n][3] > 0) {q=rn(k1,i+F[n][1],j+F[n][2],k+F[n][3]-1,l+@
\pv@F[n][4])+psum[1][s]; A[p+2*m][q]= A[p+2*m][q]+F[n][3]*F[n][0];} if (F[n][4] > 0) {q=rn(k1,@
\pv@i+F[n][1],j+F[n][2],k+F[n][3],l+F[n][4]-1)+ psum[1][s]; A[p+3*m][q]= A[p+3*m][q]+F[n][4]*@
\pv@F[n][0];} if (F[n][5] > 0) {q=rn(k1,i+F[n][1],j+F[n][2],k+F[n][3],l+F[n][4])+ psum[1][s];@
\pv@A[p+4*m][q]= A[p+4*m][q]+F[n][5]*F[n][0];}} if (s > 0) {k1=(s+1)*d-5; if (i > 0) {q=rn(k1,@
\pv@i-1,j,k,l)+psum[1][s-1]; A[p][q]=A[p][q]+i;} if (j > 0) {q=rn(k1,i,j-1,k,l)+psum[1][s-1];@
\pv@A[p+m][q]=A[p+m][q]+j;} if (k > 0) {q=rn(k1,i,j,k-1,l)+psum[1][s-1]; A[p+2*m][q]=@
\pv@A[p+2*m][q]+k;} if (l > 0) {q=rn(k1,i,j,k,l-1)+psum[1][s-1]; A[p+3*m][q]=A[p+3*m][q]+l;} if@
\pv@(i+j+k+l < k0) {q=rn(k1,i,j,k,l)+psum[1][s-1]; A[p+4*m][q]=A[p+4*m][q]+k0-i-j-k-l;}}}}}}}@}
\sk
\vbox{\tiny\sf\pv@for(i=0; i<5*siz[0][0]; i++) for(j=0; j<siz[1][0]; j++) B[i][j]=A[i][j]; printf@
\pv@("calculating a matrix of size 
\pv@("
\pv@rk1-rk2; rk4=rk2; for(i=0; i<5*siz[0][0]; i++) for(j=0; j<siz[1][0]; j++) A[i][j]=B[i][j];@
\pv@printf("calculating a matrix of size 
\pv@siz[1][0],0); printf("
\pv@("mu_2=
\sk
\vbox{\tiny\sf\pv@void rank(int a, int b, int b2) {int i, i0, j, p, rank, minc, numnz, minnz, tmp[3000]; if@
\pv@(a>=3000 || b>=3000) {printf("Too big matrix!\n"), exit(1);} rank=rk2=0; while (b>0) {for@
\pv@(i=minc=minnz=0, i0=-1; i<a; i++) {if (A[i][b-1]!=0) {if (i0==-1 || abs(A[i][b-1])<=minc)@
\pv@{for (p=numnz=0; p<b-1 && numnz<minnz; p++) {if (A[i][p]!=0) numnz++;} if (i0==-1 || abs(@
\pv@A[i][b-1])<minc || numnz<minnz) {i0=i; minnz=numnz; minc=abs(A[i][b-1]);}}}} if (i0==-1)@
\pv@{b--;} else {for (j=0; j<b; j++) tmp[j]=A[i0][j]; for (j=0; j<b-1; j++) {for (i=0; i<i0;@
\pv@i++) A[i][j] = (A[i][j]*tmp[b-1]-A[i][b-1]*tmp[j]) 
\pv@A[i-1][j]=(A[i][j]*tmp[b-1]-A[i][b-1]*tmp[j]) 
\pv@rank;} rk1=rank;} int rn(int mm, int ii, int jj, int kk, int ll) {return (mm-ii+3)*(mm-ii+@
\pv@2)*(mm-ii+1)*(mm-ii)/24+ (mm-ii-jj+2)*(mm-ii-jj+1)*(mm-ii-jj)/6+ (mm-ii-jj-kk+1)*(mm-ii-@
\pv@jj-kk)/2+ mm-ii-jj-kk-ll;} void nextc(void) {while ((c<'0'||c>'9') && c!='-' && c!='/' &&@
\pv@c!=EOF) c=fgetc(fp1);}@}
\msn
Before running ./a.out, one has to prepare the input file Inp.txt by running the first eight lines of the above program for Singular, where the output on the screen (starting from \h{3 (0,0,0,2,4)}) should be copied to Inp.txt using a text editor. This must be put in the same directory as a.out. When one runs ./a.out, the computer will demand to choose a prime. Here any number between 0 and 9 should be typed together with Return, and one can press only Return in the case the prime corresponding to 0 is chosen. Concerning $f$ in the code for Singular, it can be replaced for instance by
\sk
{\scriptsize\sf\pv@int b=2; int c=3; int a=b*c; poly f=(x^b+y^b+z^b+u^b+v^b)^c-(x^a+y^a+z^a+u^a+v^a);@}
\skn
Here we have $\dim\Gr^2_FH^3(\X_c)_{\C}\eq255$, $s\eq 285$, $\df(X)\eq40$, and $X$ has only ordinary double points. If we set
\sk
{\scriptsize\sf\pv@int b=3; int c=2; int a=b*c; poly f=3*(x^a+y^a+z^a+u^a+v^a)-(x^b+y^b+z^b+u^b+v^b)^c;@}
\skn
we have $\dim\Gr^2_FH^3(\X_c)_{\C}\eq255$, $\dim V_1\eq\tfrac{1}{2}\dim V\eq180$, $\df(X)\eq30$ with ninety singular points, which are analytic-locally defined by $x^3\pl y^3\pl u^2\pl v^2\eq0$ with Milnor number 4.
\end{rem}

\begin{rem}\label{R3.2}
In the case $X$ has only {\it ordinary double points\one} as singularities with $n\defs\dim X$ {\it odd,} it seems to be known (see for instance \cite[Proposition 3.7]{Kl} and the reference quoted there) that
\begin{equation} \label{3.4}
{\rm def}(X)=|\Sing\,X|-\mu''_{(n'-1)d/2}\,\,\bl(=\nu_{(n'+1)d/2}\br),
\end{equation}
in the notation of \cite{KCS}, \cite{nwh}, \cite{wh} (since $\tau\eq|\Sing\,X|$), where $n$ in these papers coincides with $n'\defs n{+}2$ in our paper. This equality is equivalent to the {\it vanishing\one} of the $E_1$-differential $\ddd_1\col N_{(n'+1)d/2}\tos M_{(n'-1)d/2}$ of the pole order spectral sequence using Theorem~\ref{T1} and \eqref{2.3} together with \cite[Corollary 2]{KCS}, and follows from \cite[Theorem 2.2 (where $q\eq m\mi1$ but not $m$) and also (1.1.3)]{DSW}. These are compatible with computations of the above examples employing the code from \cite[Appendix]{wh}.
\end{rem}

\end{document}